\newtheorem{theorem}{Theorem}[section]
\newtheorem{assumption}{Assumption}
\newtheorem{lemma}{Lemma}[section]
\newproof{pf}{Proof}
\newcommand{\leb}{\text{Leb}}
\journal{Probability and Statistics Letters}
\begin{document}

\begin{frontmatter}


\title{On the stability of the stochastic gradient Langevin algorithm with dependent data stream\tnoteref{lendulet}}

\tnotetext[lendulet]{Both authors were supported
by the ``Lend\"ulet'' grant 2015-6 of the Hungarian
Academy of Sciences.}

\author{Mikl\'os R\'asonyi\fnref{label2}}
\fntext[label2]{Alfr\'ed R\'enyi Institute of Mathematics, Re\'altandoda utca 13-15, 1053 Budapest, Hungary}

\author{Kinga Tikosi\fnref{label2,label3}}
\fntext[label3]{During the preparation of this paper the author attended the PhD school of Central European University, Budapest.}


\begin{abstract}
We prove, under mild conditions, that 
the stochastic gradient Langevin dynamics
converges to a limiting law as time tends to infinity,
even in the case where the driving data sequence
is dependent.
\end{abstract}

\begin{keyword}
stochastic gradient \sep Langevin dynamics \sep dependent
data 


\end{keyword}

\end{frontmatter}



\section{Stochastic gradient Langevin dynamics}

Sampling from high-dimensional, possibly not even logconcave distributions is
a challenging task, with far-reaching applications in optimization, in particular,
in machine learning, see \citet{raginsky,5,6,brosse2018promises}.
 
Let $U:\mathbb{R}^{d}\to \mathbb{R}_{+}$ be a given function and consider
the corresponding Langevin equation
\begin{equation}\label{landi}
d\Theta_{t}=-\nabla U(\Theta_{t})\, dt+\sqrt{2}\, dW_{t},
\end{equation}
where $W$ is a $d$-dimensional standard Brownian motion.
Under suitable assumptions, the unique invariant probability $\mu$
for the diffusion process \eqref{landi} has a density (with respect
to the $d$-dimensional Lebesgue measure) that is proportional
to $\exp(-U(x))$, $x\in\mathbb{R}^{d}$.

In practice, Euler approximations of \eqref{landi} may be used for sampling
from $\mu$, i.e.\ a recursive scheme 
\begin{equation}\label{labi}
\vartheta_{t+1}^{\lambda}=\vartheta_{t}^{\lambda}-\lambda
\nabla U(\vartheta^{\lambda}_{t})+
\sqrt{2\lambda}\xi_{t+1}
\end{equation}
is considered for some small $\lambda>0$ and independent standard $d$-dimensional Gaussian
sequence $\xi_{i}$, $i\geq 1$.

In some important applications, however, $U,\nabla U$ are unknown, one disposes
only of unbiased estimates $H(\theta,Y_{t})$, $t\in\mathbb{N}$ of $\nabla U(\theta)$,{}
where $Y_{t}$ is some stationary data sequence.  
From this point on we switch to rigorous mathematics.

Let us fix integers $d,m\geq 1$ and a probability space $(\Omega,\mathcal{F},\mathbb{P})$. 
$\mathscr{B}(\mathscr{X})$ denotes the $\sigma$-algebra of the Borel-sets of a Polish space $\mathscr{X}$. For a random variable $X$, $\mathscr{L}(X)$ denotes its law.
The Euclidean norm on $\mathbb{R}^d$ or $\mathbb{R}^m$ will be denoted by $|\cdot|$, while $||\cdot||_{TV}$ stands for 
the total variation distance of probability measures
on $\mathcal{B}(\mathbb{R}^d)$. Let $B_r:=\{\theta\in\mathbb{R}^k:\,
|\theta|\leq r\}$ denote the ball of radius $r$, for
$r\geq 0$, for both $k=d$ and $k=m$, depending
on the context. The notation $\leb(\cdot)$ refers to the $d$-dimensional Lebesque-measure.

For $0<\lambda\leq 1$, $t=0,1,\dots$ and for a constant initial value $\theta_0\in\mathbb{R}^d$ consider the recursion
\begin{equation}\label{alg:SGLD}
    \theta_{t+1}^\lambda=\theta_{t}^\lambda-\lambda H(\theta_t^\lambda,Y_t)+\sqrt{\lambda}\xi_{t+1},
    \ t\in\mathbb{N},\ \theta_0^\lambda:=\theta_0,
\end{equation}
where $\xi_i$, $i\geq 1$ is an i.i.d.\ sequence of $d$-dimensional random variables with independent coordinates such that $\mathbb{E}[\xi_{i}]=0$ and $E[|\xi_{i}|^2]=
\sigma^2$ for some $\sigma^2$.
Furthermore, the density function $f$ of $\xi_i$
with respect to $\leb$
is assumed strictly positive on every compact set.
Assume that $(Y_t)_{t\in\mathbb{Z}}$ is a strict sense stationary process with values in $\mathbb{R}^m$ an it is independent of the noise process $(\xi_t)_{t\geq 1}$.
Finally, $H:\mathbb{R}^d\times\mathbb{R}^m\to\mathbb{R}^d$
is a measurable function.




A particular case of \eqref{alg:SGLD} is the
\emph{stochastic gradient Langevin dynamics} (SGLD), introduced in \citet{welling2011bayesian}, 
designed to learn from large datasets. 
See more about different versions of SGLD and their 
connections in \citet{brosse2018promises}. 
Note that in the present setting, unlike in SGLD, we do not assume that $H$ is the gradient of a function and we do not assume
$\xi_i$ to be Gaussian. 

A setting similar to ours was considered in \citet{lovas2019markov} under different assumptions. We will compare our results
to those of \citet{lovas2019markov} at the end of Section
\ref{S:1} below.

The sampling error of $\theta_{t}^{\lambda}$ has been thoroughly analysed in
the literature: $d(\theta_{t}^{\lambda},\mu)$ has been estimated for various probability metrics $d$,
see \citet{5,6,raginsky,brosse2018promises}. The ergodic behaviour of $\theta_{t}^\lambda$,
however, has eluded attention so far. If $Y_{t}$ are i.i.d.\ then $\theta_{t}^{\lambda}$
is a homogeneous Markov chain and standard results of Markov chain theory apply.
In the more general, stationary case (considered in \citet{6,5}), however,
that machinery is not available. In the present note we study scheme \eqref{alg:SGLD} with stationary $Y_t$
and establish that its law converges to a limit
in total variation.

\section{Main results}
\label{S:1}

\begin{assumption} \label{asp:dissi}
There is a constant $\Delta> 0$ and a measurable
function $b:\mathbb{R}^m\to\mathbb{R}_+$ such that, for all $\theta\in\mathbb{R}^d$ and $y\in\mathbb{R}^m$ 
\begin{equation}\label{test}
\left<H(\theta, y),\theta\right>\geq\Delta|\theta|^2-b(y).
\end{equation}
\end{assumption}

\begin{assumption}\label{asp:growth_of_H}
There exist constants $K_1,K_2,K_3>0$ and $\beta\geq 1$ such that 
\begin{equation}
    |H(\theta,y)|\leq K_1 |\theta| + K_2|y|^{\beta}+K_3.
\end{equation}
\end{assumption}

\begin{assumption}\label{asp:moments} There exist (finite)
constants $M_y,M_b>0$ such that $\mathbb{E}[|Y_0|^{2\beta}]\leq M_y$ and $\mathbb{E}[b(Y_0)]\leq M_b$.
\end{assumption}

\begin{theorem}\label{main}
Let Assumptions \ref{asp:dissi}, \ref{asp:growth_of_H} and \ref{asp:moments} hold. Then, for $\lambda$ small enough,  
the law $\mathscr{L}(\theta_t^\lambda)$ of the iteration defined by (\ref{alg:SGLD}) converges in total variation as $t\rightarrow\infty$ and the limit does not depend on
the initialization $X_0$.
\end{theorem}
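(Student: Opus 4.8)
The plan is to exploit the fact that although $(\theta_t^\lambda)$ is not itself a Markov chain, it becomes a time-inhomogeneous Markov chain once we condition on the whole data path $\mathbf{Y}:=(Y_t)_{t\in\mathbb{Z}}$, since the transition from $\theta_t^\lambda$ to $\theta_{t+1}^\lambda$ is then governed only by the i.i.d.\ noise. First I would record a Lyapunov estimate. Writing $\mathcal{F}_t=\sigma(\theta_0,\xi_1,\dots,\xi_t,(Y_s)_{s\le t})$ and using that $\xi_{t+1}$ is centred, independent of $\mathcal{F}_t$, with $\mathbb{E}|\xi_{t+1}|^2=\sigma^2$, a direct expansion gives
\begin{equation*}
\mathbb{E}[|\theta_{t+1}^\lambda|^2\mid\mathcal{F}_t]=|\theta_t^\lambda-\lambda H(\theta_t^\lambda,Y_t)|^2+\lambda\sigma^2 .
\end{equation*}
Invoking Assumption~\ref{asp:dissi} for the cross term and Assumption~\ref{asp:growth_of_H} for $|H|^2$, one obtains, for suitable constants $C_1,C_2,C_3$,
\begin{equation*}
\mathbb{E}[|\theta_{t+1}^\lambda|^2\mid\mathcal{F}_t]\le(1-2\lambda\Delta+C_1\lambda^2)|\theta_t^\lambda|^2+\lambda\bigl(2b(Y_t)+C_2|Y_t|^{2\beta}+C_3\bigr),
\end{equation*}
and for $\lambda$ small enough the first factor is at most $1-\lambda\Delta<1$. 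Taking expectations and using Assumption~\ref{asp:moments} to bound $\mathbb{E}[b(Y_t)]$ and $\mathbb{E}|Y_t|^{2\beta}$ yields a uniform-in-$t$ bound $\sup_t\mathbb{E}|\theta_t^\lambda|^2\le C$, hence tightness and confinement of the iterates to large balls.

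Next I would prove that the chain forgets its initialization in total variation. Conditionally on $\mathbf{Y}$, the one-step transition admits the density $z\mapsto\lambda^{-d/2}f((z-\theta+\lambda H(\theta,Y_t))/\sqrt\lambda)$, which by the assumed strict positivity of $f$ on compacts is bounded below on any product set $\{|\theta|\le R\}\times\{|z|\le R\}$, provided the current data value satisfies $|Y_t|\le L$. This furnishes a Doeblin-type minorization on the ``small set'' $S_{R,L}=\{|\theta|\le R\}\cap\{|Y_t|\le L\}$: two copies $\theta_t^\lambda,\tilde\theta_t^\lambda$ of the recursion, driven by the \emph{same} data but by noise coupled so as to maximise coincidence, merge with a fixed positive probability at every time $t$ at which both lie in $B_R$ and $|Y_t|\le L$. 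The Lyapunov bound guarantees that both chains return to $B_R$ with controlled frequency, while $\mathbb{P}(|Y_t|>L)$ is made small uniformly in $t$ by Assumption~\ref{asp:moments} and stationarity; together these force the coupling to succeed with probability tending to $1$, so that
\begin{equation*}
\|\mathscr{L}(\theta_t^\lambda\mid\mathbf{Y})-\mathscr{L}(\tilde\theta_t^\lambda\mid\mathbf{Y})\|_{TV}\to0
\end{equation*}
for almost every $\mathbf{Y}$, and after integrating out $\mathbf{Y}$ the unconditional total variation distance of two differently initialised copies tends to $0$. In particular the limit, if it exists, cannot depend on $\theta_0$.

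Finally, to obtain convergence of $\mathscr{L}(\theta_t^\lambda)$ itself I would use stationarity through a backward (pullback) representation. Extending the noise to an i.i.d.\ family $(\xi_t)_{t\in\mathbb{Z}}$ independent of $\mathbf{Y}$, write $F_s(\theta)=\theta-\lambda H(\theta,Y_s)+\sqrt\lambda\,\xi_{s+1}$ and set $\Psi_n=F_{-1}\circ\cdots\circ F_{-n}$. Shift-invariance of $(Y_t,\xi_t)_{t\in\mathbb{Z}}$ gives $\mathscr{L}(\theta_n^\lambda)=\mathscr{L}(\Psi_n(\theta_0))$, and for $k\ge0$ one has $\Psi_{n+k}(\theta_0)=\Psi_n(\hat\theta)$, where $\hat\theta$ is the state at time $-n$ of the chain started at $-n-k$. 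Conditionally on $\mathbf{Y}$ the random point $\hat\theta$ is independent of the map $\Psi_n$ and, by the moment bound, lies in $B_R$ with high probability; hence the forgetting estimate of the previous step shows $\|\mathscr{L}(\theta_{n+k}^\lambda)-\mathscr{L}(\theta_n^\lambda)\|_{TV}\to0$ uniformly in $k$. Thus $(\mathscr{L}(\theta_n^\lambda))_n$ is Cauchy in total variation and therefore converges.

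I expect the genuine difficulty to lie in the forgetting step. The minorization constant on $S_{R,L}$ is random: it degrades as the data value $|Y_t|^\beta$ grows, so the drift $\lambda H(\theta,Y_t)$ can transiently expel the chain from $B_R$. The crux is to quantify, using only the dissipativity of Assumption~\ref{asp:dissi} and the moment control of Assumption~\ref{asp:moments}, that ``good'' times --- at which both the state and the current data are bounded --- occur densely enough along the trajectory for the successive coupling attempts to accumulate into a sure meeting; handling this random, possibly small, minorization while the data are merely stationary (not independent) is the heart of the argument.
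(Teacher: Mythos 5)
Your first two steps are correct and coincide with the paper's own Lemmas \ref{obu1} and \ref{obu2}: the drift computation yielding $\sup_t\mathbb{E}|\theta_t^\lambda|^2<\infty$ (which is the tightness condition, Assumption \ref{asp:tightness}), and the lower bound on the one-step transition density $z\mapsto\lambda^{-d/2}f\bigl((z-\theta+\lambda H(\theta,y))/\sqrt{\lambda}\bigr)$ on balls, valid whenever the state and the current data value both lie in compact sets (which is the minorization condition, Assumption \ref{asp:minorization}). Up to this point your argument and the paper's are the same.

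The difference, and the gap, is in what comes after. The paper converts these two conditions into the conclusion by invoking Theorem \ref{totoro}, i.e.\ Theorem 2.11 of \citet{gerencser2020invariant} on Markov chains in random environments, as a black box. You instead try to prove the conclusion directly: a coupling (``forgetting'') argument conditional on the data path $\mathbf{Y}$, followed by a pullback argument showing $\mathscr{L}(\theta_n^\lambda)$ is Cauchy in total variation. The pullback reduction is sound (the conditional independence of $\hat\theta$ and $\Psi_n$ given $\mathbf{Y}$ holds, and TV-Cauchy implies convergence), but it rests entirely on the forgetting estimate, and that estimate is never actually proven. You assert that ``good'' times --- both copies in $B_R$ and $|Y_t|\le L$ --- occur densely enough that the successive coupling attempts accumulate into an almost sure meeting. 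For i.i.d.\ or mixing data this would be a routine Doeblin/regeneration argument, but for merely stationary, possibly non-ergodic $Y$ it is not: the marginal bounds $\sup_t\mathbb{P}(\theta_t^\lambda\notin B_R)\le\varepsilon$ and $\mathbb{P}(|Y_t|>L)\le\varepsilon$ do not control the joint occurrence of good times along a trajectory; the coupling attempts are not independent (whether time $t$ is good depends on $\mathbf{Y}$ and, through the state, on all past noise), so failure probabilities cannot simply be multiplied; and the asymptotic frequency of times with $|Y_t|\le L$ is itself a random variable whose positivity must be argued. Handling this random, state- and data-dependent minorization is exactly the content of the cited theorem and is the genuinely hard part of the whole proof --- your own closing paragraph concedes as much. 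So while your route is viable in principle (it is essentially how such results are established), as written it reduces the theorem to its hardest step and leaves that step unproven; the paper's proof is complete precisely because it delegates this step to an existing theorem.
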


In \citet{lovas2019markov}, $\Delta$ in \eqref{test}
was allowed to depend on $y$ but $b$ in \eqref{test}
had to be constant, the process $Y$ was assumed
bounded and the process $\xi$ Gaussian.
Furthermore, in Assumption \ref{asp:growth_of_H}, 
$\beta$ had to be $1$. Under these conditions the
conclusion of Theorem \ref{main} was obtained, together
with a rate estimate.

Theorem \ref{main} above
complements the results of \citet{lovas2019markov}:
$\Delta$ must be constant in our setting but the restrictive
boundedness hypothesis on $Y$ could be removed,
$\xi$ need not be Gaussian, $\beta$ in 
\ref{test} can be arbitrary
and $b$ in \ref{test} may 
depend on $y$. The examples in Section \ref{sec:ex} demonstrate that our present results 
cover a wide range of relevant applications
where the obtained generalizations are crucial.

\section{Markov chains in random environment}

The rather abstract Theorem \ref{totoro} below,
taken from \citet{gerencser2020invariant}, is 
the key result we use in this paper. Let us first recall the related terminology and the assumptions.

Let $\mathscr{X}$ and $\mathscr{Y}$ be Polish spaces and let $(\mathscr{X}_n)_{n\in\mathbb{N}}$  
(resp.\ $(\mathscr{Y}_n)_{n\in\mathbb{N}}$)
be a non-decreasing sequence of (non-empty) Borel-sets in $\mathscr{X}$ (resp.\ $\mathscr{Y}$).
Consider a parametric family of transition kernels, i.e. a map $Q:\mathscr{Y}\times\mathscr{X}\times\mathscr{B}(\mathscr{X})\rightarrow [0,1]$ such that for all $B\in\mathscr{B}(\mathscr{X})$ the function $(x,y)\rightarrow Q(x,y,B)$ is measurable and for every $(x,y)\in\mathscr{X}\times \mathscr{Y}$ $Q(x,y,\cdot)$ is a probability.

An $\mathscr{X}$ valued stochastic process $(X_t)_{t\in\mathbb{N}}$ is called a \emph{Markov chain in a random environment} with transition kernel 
$Q$ if $X_0\in\mathscr{X}$ is deterministic (for simplicity) and 
\begin{equation}
 \mathbb{P}(X_{t+1}\in A|\mathcal{F}_t)=Q(X_t,Y_t,A) \text{, for } t\in\mathbb{N},   
\end{equation}
 where we use the filtration $\mathcal{F}_t=\sigma(Y_k, k\in\mathbb{Z}; X_j, 0\leq j\leq t)$.

For a parametric family of transition kernels $Q$ and a bounded (or non-negative) function $V:\mathscr{X}\rightarrow \mathbb{R}$ define
\begin{equation}
    [Q(y)V](x)=\int_{\mathscr{X}}V(z)Q(x,y,dz)\text{, for }x\in\mathscr{X}.
\end{equation}

\begin{assumption}\label{asp:tightness} Let the process $(X_t)_{t\in\mathbb{N}}$ started from $X_0$ with 
be such that 
\begin{equation}
    \sup_{t\in\mathbb{N}}\mathbb{P}(X_t \notin\mathscr{X}_n)\rightarrow 0, n\rightarrow \infty.
\end{equation}
\end{assumption}


\begin{assumption}\label{asp:minorization} (Minorization condition) Let $\mathbb{P}(Y_0\notin\mathscr{Y}_n), n\rightarrow\infty.$ Assume that there exists a sequence of probability measures $(\nu_n)_{n\in\mathbb{N}}$ and a non-decreasing sequence $(\alpha_n)_{n\in\mathbb{N}}$ with $\alpha_n\in(0,1]$ such that  for all $n\in\mathbb{N}, x\in\mathscr{X}_n, y\in\mathscr{Y}_n$, and $A\in\mathcal{B}(\mathcal{X}),$ \begin{equation}
    Q(x,y,A)\geq\alpha_n\nu_n(A).
\end{equation}
\end{assumption}

\begin{theorem}\label{totoro} (Theorem 2.11. of \citet{gerencser2020invariant}) Let Assumptions \ref{asp:tightness} and \ref{asp:minorization} hold. Then there exists a probability $\mu_*$ on $\mathscr{B}(\mathscr{X}\times \mathscr{Y}^{\mathbb{Z}})$ such that $$||\mathscr{L}(X_t,(Y_{t+k})_{k\in\mathbb{Z}})-\mu_*||_{TV}\rightarrow0,\textit{ as } t\rightarrow\infty.$$ If $(X'_t)_{k\in\mathbb{N}}$ is another such Markov chain 
started from a different $X'_0$ satisfying Assumption
\ref{asp:tightness} then $$||\mathscr{L}(X_t,(Y_{t+k})_{k\in\mathbb{Z}})-
\mathscr{L}(X_t',(Y_{t+k})_{k\in\mathbb{Z}})||_{TV}\rightarrow 0,\textit{ as } t\rightarrow\infty.\quad \Box$$ 
\end{theorem}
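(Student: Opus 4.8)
The plan is to prove the statement by a coupling argument built on the minorization condition (Assumption \ref{asp:minorization}), combined with the tightness estimates of Assumptions \ref{asp:tightness} and \ref{asp:minorization}. I would address the \emph{second} assertion first, since the existence of $\mu_*$ will then follow by a Cauchy argument. Fix two chains $(X_t)$, $(X'_t)$ driven by the \emph{same} environment $(Y_t)_{t\in\mathbb{Z}}$ from deterministic starting points, both satisfying Assumption \ref{asp:tightness}. For a fixed level $n$ I would build, on an enlarged probability space, a joint realisation of the two chains by the Nummelin splitting: whenever $X_t=X'_t$ the two are advanced by a common draw from $Q(X_t,Y_t,\cdot)$; whenever $X_t\neq X'_t$ and $(X_t,X'_t,Y_t)\in\mathscr{X}_n\times\mathscr{X}_n\times\mathscr{Y}_n$, I would, with probability $\alpha_n$, set $X_{t+1}=X'_{t+1}$ by a common draw from $\nu_n$ and otherwise advance the coordinates independently through the residual kernels $(Q(\cdot,Y_t,\cdot)-\alpha_n\nu_n)/(1-\alpha_n)$; on the remaining event the coordinates are advanced independently through $Q$. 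The measurable selection of the residual kernels and the auxiliary coin flips are routine on Polish spaces via regular conditional distributions. By design each coordinate is marginally a Markov chain in the environment with kernel $Q$, and coalescence is absorbing.

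The quantitative heart is to bound the coalescence time $\tau:=\inf\{t:\,X_t=X'_t\}$. Writing $\epsilon_n:=\sup_t\mathbb{P}(X_t\notin\mathscr{X}_n)\vee\sup_t\mathbb{P}(X'_t\notin\mathscr{X}_n)$ and $\delta_n:=\mathbb{P}(Y_0\notin\mathscr{Y}_n)$, the good event $G_t:=\{X_t\in\mathscr{X}_n,X'_t\in\mathscr{X}_n,Y_t\in\mathscr{Y}_n\}$ satisfies $\mathbb{P}(G_t^c)\leq 2\epsilon_n+\delta_n=:\eta_n$ uniformly in $t$, with $\eta_n\to 0$ by Assumptions \ref{asp:tightness}, \ref{asp:minorization} and stationarity of $Y$. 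Since on $G_t$ coalescence occurs at the next step with conditional probability at least $\alpha_n$, bounding $\mathbb{P}(X_t\neq X'_t\mid\mathcal{F}_{t-1})\leq 1-\alpha_n\mathbb{1}_{G_{t-1}}$ and using $\mathbb{P}(\tau>t-1,G_{t-1}^c)\leq\eta_n$ yields the recursion $\mathbb{P}(\tau>t)\leq(1-\alpha_n)\mathbb{P}(\tau>t-1)+\alpha_n\eta_n$, whence $\mathbb{P}(\tau>t)\leq\eta_n+(1-\alpha_n)^t$. As $(X_t,(Y_{t+k})_k)$ and $(X'_t,(Y_{t+k})_k)$ coincide on $\{\tau\leq t\}$, this gives $\|\mathscr{L}(X_t,(Y_{t+k})_k)-\mathscr{L}(X'_t,(Y_{t+k})_k)\|_{TV}\leq\eta_n+(1-\alpha_n)^t$. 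The left-hand side is independent of the coupling, so letting $t\to\infty$ and then $n\to\infty$ proves the second assertion.

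For the existence of $\mu_*$ I would show that $\mu_t:=\mathscr{L}(X_t,(Y_{t+k})_{k\in\mathbb{Z}})$ is Cauchy in total variation. Fix $s\geq 0$ and introduce a \emph{delayed} chain $(X'_r)_{r\geq s}$ with $X'_s:=X_0$, advanced by the same environment through $Q$. Applying the shift $Y_\cdot\mapsto Y_{\cdot+s}$ together with strict stationarity of $Y$ to the functional defining the chain, one identifies $\mathscr{L}(X'_{s+t},(Y_{s+t+k})_k)=\mu_t$, while $\mathscr{L}(X_{s+t},(Y_{s+t+k})_k)=\mu_{s+t}$. Coupling $X$ and $X'$ from time $s$ onward exactly as above, and noting that $X'$ inherits the tightness of $X$ through stationarity (its time-$(s+j)$ law equals that of $X_j$), the coalescence estimate after $t$ joint steps gives $\|\mu_{s+t}-\mu_t\|_{TV}\leq\eta_n+(1-\alpha_n)^t$ \emph{uniformly in} $s$. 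Given $\varepsilon>0$, choosing $n$ with $\eta_n<\varepsilon/2$ and then $T$ with $(1-\alpha_n)^T<\varepsilon/2$ shows $\|\mu_{t'}-\mu_t\|_{TV}<\varepsilon$ for all $t'\geq t\geq T$, so $(\mu_t)$ is Cauchy. Since $\mathscr{X}\times\mathscr{Y}^{\mathbb{Z}}$ is Polish and probability measures on a Polish space are complete under $\|\cdot\|_{TV}$, the sequence converges to some $\mu_*$, and independence of the limit from the initialisation is immediate from the second assertion.

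I expect the main obstacle to be reconciling the \emph{localised} nature of the minorization — it holds only on $\mathscr{X}_n\times\mathscr{Y}_n$ with a level-dependent constant $\alpha_n$ — with genuine convergence to zero. A single coupling at a fixed level $n$ yields only the floor $\eta_n$; the resolution is to carry out the estimate at the level of the intrinsic, coupling-independent total variation distance and only afterwards send $n\to\infty$. A secondary technical point is the rigorous measurable construction of the splitting coupling on a common probability space, together with the careful use of stationarity to relocate the starting time without disturbing the reported environment block $(Y_{t+k})_k$.
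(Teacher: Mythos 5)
The paper contains no proof of Theorem \ref{totoro}: it is quoted verbatim, proof omitted, from \citet{gerencser2020invariant} (note the $\Box$ inside the statement), so there is no internal argument to compare yours against line by line. That said, your proposal is a correct reconstruction and follows essentially the same strategy as the cited source: a Nummelin-type splitting coupling driven by the localised minorization on $\mathscr{X}_n\times\mathscr{Y}_n$, a coalescence bound of the form $\mathbb{P}(\tau>t)\leq \eta_n+(1-\alpha_n)^t$ obtained by combining the coin-flip probability $\alpha_n$ with the uniform-in-time tightness of both chains and stationarity of $Y$, and then a Cauchy-in-total-variation argument for the laws $\mu_t=\mathscr{L}(X_t,(Y_{t+k})_{k\in\mathbb{Z}})$ via a time-shifted (delayed) copy of the chain, using completeness of the space of probability measures under the total variation norm. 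Your key inequalities check out: the recursion $\mathbb{P}(\tau>t)\leq(1-\alpha_n)\mathbb{P}(\tau>t-1)+\alpha_n\eta_n$ is valid because $\{\tau>t-1\}$ and $G_{t-1}$ are $\mathcal{F}_{t-1}$-measurable, and your order of limits ($t\to\infty$ at fixed $n$, then $n\to\infty$) correctly dissolves the floor $\eta_n$ left by the localisation.

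Two points deserve explicit mention rather than the implicit treatment you give them. First, the filtration in the MCRE definition contains the \emph{entire} two-sided environment $(Y_k)_{k\in\mathbb{Z}}$; your coupling conditions on $\mathcal{F}_t$ and still extracts the lower bound $\alpha_n$, which works only because the minorization of Assumption \ref{asp:minorization} holds \emph{pointwise} in $y\in\mathscr{Y}_n$ (an averaged minorization would not survive this conditioning) — this is worth a sentence. Second, the identification $\mathscr{L}(X'_{s+t},(Y_{s+t+k})_{k\in\mathbb{Z}})=\mu_t$ for the delayed chain rests on the fact that, conditionally on $\sigma(Y)$, the chain is an inhomogeneous Markov chain with kernels $Q(\cdot,Y_r,\cdot)$, so its joint law with the environment is a functional of $(X_0,Q,\mathscr{L}((Y_k)_{k\in\mathbb{Z}}))$ alone; strict stationarity of the full two-sided sequence then yields the claim. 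With these made precise (plus the cosmetic factor of $2$ depending on one's total variation convention), your argument is sound; the paper's decision to cite rather than prove means you have effectively supplied the missing proof in the spirit of the original.
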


\section{Proofs}

Define the  Markov chain associated to the recursive scheme (\ref{alg:SGLD}) as \begin{equation}
    Q(\theta,y,A)=\mathbb{P}(\theta-\lambda H(\theta,y)+\sqrt{\lambda}\xi_{n+1}\in A),
\end{equation} for all $y\in\mathscr{Y}:=\mathbb{R}^m$, $\theta\in\mathscr{X}:=\mathbb{R}^d$ and $A\in\mathscr{B}(\mathbb{R}^d)$. 

\begin{lemma}\label{obu1} For small enough $\lambda$,
under Assumptions \ref{asp:dissi} and \ref{asp:growth_of_H}, the process $(\theta_t^\lambda)_{t\in\mathbb{N}}$ given by recursion (\ref{alg:SGLD}) satisfies Assumption \ref{asp:tightness} with $\mathscr{X}_n:=B_n$ (the ball
of radius $n$).
\end{lemma}
\begin{pf}
Choose $V(\theta)=|\theta|^2$. Then, since $E\xi_1=0$,
\begin{align*}
    [Q(y)V](\theta)&=\mathbb{E}[V(\theta-\lambda H(\theta,y)+\sqrt{\lambda}\xi_1)]\\
    &=|\theta|^2+\lambda^2|H(\theta,y)|^2+\lambda\mathbb{E} |\xi_1|^2-2\lambda\left<\theta,H(\theta,y)\right> \\
    &\leq (1-2\lambda\Delta)|\theta|^2+\lambda(\sigma^2 +2b(y))+3\lambda^2(K_1^2|\theta|^2+K_2^2|y|^{2\beta}+K_3^2) \\
    &=(1-2\lambda\Delta +3\lambda^2 K_1^2)V(\theta)+\lambda(\sigma^2 +2b(y))+3\lambda^2(K_2^2|y|^{2\beta}+K_3^2)\\
    &=\gamma V(\theta)+K(y),
\end{align*}
with $K(y)=\lambda(\sigma^2 +2b(y))+3\lambda^2[K_2^2|y|^{2\beta}+K_3^2]$ and $\gamma=(1-2\lambda\Delta +3\lambda^2 K_1^2)$. Note that for small enough $\lambda$,  $\gamma\in(0,1)$, independent of $y$. 

Now using Lemma \ref{lem:multipleQ} below and setting $\theta=\theta_0$ and $y_k=Y_k$ for $k\geq 1$
we get, for each $t\geq 1$,
\begin{align*}
     \mathbb{E}|\theta_t^\lambda|^2 &=\mathbb{E} [Q(Y_t)Q(Y_{t-1})\dots Q(Y_1)V](\theta_0)
     \leq \gamma^t V(\theta_0)+\sum_{i=1}^t \gamma^{i}\mathbb{E} K(Y_i) \\
     &= \gamma^t |\theta_0|^2+\sum_{i=1}^t \gamma^{i} [\lambda(\sigma^2 +2\mathbb{E}[b(Y_i)])+3\lambda^2(K_2^2\mathbb{E}|Y_i|^{2\beta}+K_3^2)]\\
     &\leq |\theta_0|^2+\frac{\gamma}{1-\gamma} [(\sigma^2 +2M_b)+3(K_2^2 M_y+K_3^2)]
     <\infty, 
\end{align*}
by Assumption \ref{asp:moments}. Then, using Markov's inequality, we arrive at
\begin{equation}
     \mathbb{P}(\theta_t^\lambda\notin\mathscr{X}_n)= \mathbb{P}(|\theta_t^\lambda|>n)\leq\frac{\sup_t
     \mathbb{E}
     |\theta_t^\lambda|^2}{n^2}\rightarrow 0\text{, as } n\rightarrow\infty.\quad \Box
\end{equation}
\end{pf}

\begin{lemma}\label{lem:multipleQ}
Assume $[Q(y)V](\theta) \leq \gamma V(\theta)+K(y)$. Then
\begin{equation} 
     [Q(y_k)Q(y_{k-1})\dots Q(y_1)V](\theta)\leq \gamma^k V(\theta)+\sum_{i=1}^k \gamma^{i-1}K(y_i).
\end{equation}
\end{lemma}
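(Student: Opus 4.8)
The plan is to argue by induction on $k$, using only two elementary properties shared by every operator $Q(y)$. First, $Q(y)$ is \emph{monotone}: if $W_1\le W_2$ pointwise then $[Q(y)W_1](\theta)\le[Q(y)W_2](\theta)$, because each $Q(\theta,y,\cdot)$ is a nonnegative measure. Second, $Q(y)$ is affine along constants and positively homogeneous: for any constant $c$ and scalar $a\ge 0$ one has $[Q(y)(aW+c)](\theta)=a[Q(y)W](\theta)+c$, since $Q(\theta,y,\cdot)$ is a \emph{probability} measure and thus integrates the constant $c$ to itself. Throughout, $V\ge 0$, so every integral defining the iterated action is well defined in $[0,\infty]$ and the asserted inequalities merely bound it from above.

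The base case $k=1$ is exactly the hypothesis $[Q(y_1)V](\theta)\le\gamma V(\theta)+K(y_1)$, which is the claimed inequality since $\sum_{i=1}^{1}\gamma^{i-1}K(y_i)=K(y_1)$. For the inductive step I would assume the bound for $k-1$ and write the $k$-fold composition as $Q(y_k)$ applied to $W:=Q(y_{k-1})\cdots Q(y_1)V$. The induction hypothesis furnishes the pointwise estimate $W(z)\le\gamma^{k-1}V(z)+C$ for all $z$, where $C:=\sum_{i=1}^{k-1}\gamma^{i-1}K(y_i)$ is independent of $z$. Applying $Q(y_k)$ and invoking monotonicity, then the affine/homogeneity property, gives $[Q(y_k)W](\theta)\le\gamma^{k-1}[Q(y_k)V](\theta)+C$. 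Using the one-step hypothesis once more, $[Q(y_k)V](\theta)\le\gamma V(\theta)+K(y_k)$, so that $\gamma^{k-1}[Q(y_k)V](\theta)\le\gamma^k V(\theta)+\gamma^{k-1}K(y_k)$. Combining with the definition of $C$ yields precisely $\gamma^k V(\theta)+\sum_{i=1}^{k}\gamma^{i-1}K(y_i)$, closing the induction.

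I do not expect a genuine obstacle: the statement is a routine one-step drift estimate iterated. The only points that require a little attention are the bookkeeping of the geometric weights $\gamma^{i-1}$ — which is what makes it cleaner to peel off the \emph{outermost} kernel $Q(y_k)$ rather than the innermost one, so that the residual composition is exactly the object to which the induction hypothesis applies — and the justification for passing $Q(y_k)$ through the affine upper bound for $W$, which rests solely on each $Q(\theta,y,\cdot)$ being a probability measure. Because $V$ is nonnegative, no integrability question arises, and the computation remains valid even should some of the quantities equal $+\infty$.
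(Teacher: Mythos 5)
Your proof is correct and is essentially identical to the paper's: both argue by induction on $k$, peel off the outermost kernel $Q(y_k)$, apply the induction hypothesis pointwise to $Q(y_{k-1})\cdots Q(y_1)V$ under the integral (monotonicity plus the fact that $Q(\theta,y_k,\cdot)$ is a probability measure), and finish with the one-step drift bound $[Q(y_k)V](\theta)\leq\gamma V(\theta)+K(y_k)$. The only difference is cosmetic: you make the monotonicity and affine-along-constants properties explicit, whereas the paper uses them implicitly through the integral representation of the composed kernels.
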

\begin{pf}
We prove the statement by induction. For $k=1$, it is
true by assumption. Using that \begin{equation}
    [Q(y_2)Q(y_1)V](x)=\int_{\mathscr{X}}Q(x,y_2,dr)\int_{\mathscr{X}}V(z)Q(r,y_1,dz)\text{, for }r\in\mathscr{X},
\end{equation}    
for $k>1$ we get
\begin{align*} 
     [Q(y_k)Q(y_{k-1})\dots Q(y_1)V](\theta)&= \int_{\mathscr{X}} Q(\theta,y_k,dx) [Q(y_{k-1})Q(y_{k-2})\dots Q(y_1)V](x)\\
     &\leq \int_{\mathscr{X}} \left( \gamma^{k-1} V(x)+\sum_{i=1}^{k-1} \gamma^{i-1}K(y_i)\right) Q(\theta,y_k,dx)\\
     &=\gamma^{k-1} \int_{\mathscr{X}}V(x)Q(\theta,y_k,dx) + \sum_{i=1}^{k-1} \gamma^{i-1}K(y_i)  \\
     &\leq \gamma^k V(\theta)+\sum_{i=1}^k \gamma^{i-1}K(y_i).
\quad \Box\end{align*}
\end{pf}

\begin{lemma}\label{obu2}
Define $\mathscr{X}_n=B_n$, $\mathscr{Y}_n:=B_n$,
$n\in\mathbb{N}$ and let Assumptions \ref{asp:dissi} and \ref{asp:growth_of_H} hold. Then Assumption \ref{asp:minorization} is satisfied, for all $\lambda$. 
\end{lemma}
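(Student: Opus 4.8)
The plan is to verify Assumption 2 (the minorization condition) by establishing a uniform lower bound on the transition kernel $Q(\theta,y,A)$ over the compact sets $\mathscr{X}_n=B_n$ and $\mathscr{Y}_n=B_n$. The key observation is that the randomness in one step of the recursion comes entirely from the additive Gaussian-type noise $\sqrt{\lambda}\xi_{n+1}$, and its density $f$ is strictly positive on every compact set by hypothesis. First I would write the transition kernel explicitly as a density: since
\begin{equation}
\theta_{t+1}^\lambda=\theta-\lambda H(\theta,y)+\sqrt{\lambda}\xi_{t+1},
\end{equation}
the conditional law of $\theta_{t+1}^\lambda$ given $(\theta,y)$ has density
\begin{equation}
q(\theta,y,z)=\lambda^{-d/2}f\!\left(\frac{z-\theta+\lambda H(\theta,y)}{\sqrt{\lambda}}\right),\quad z\in\mathbb{R}^d,
\end{equation}
with respect to Lebesgue measure, using that $\xi_{t+1}$ has density $f$.

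Next I would localise. Fix $n$ and restrict attention to $\theta\in B_n$, $y\in B_n$. By Assumption 2 (growth of $H$), the shift $m(\theta,y):=\theta-\lambda H(\theta,y)$ satisfies $|m(\theta,y)|\leq (1+\lambda K_1)n+\lambda K_2 n^{\beta}+\lambda K_3=:R_n$, a bound depending only on $n$ (and the fixed constants), so the mean of the one-step transition stays inside the fixed ball $B_{R_n}$. The plan is then to compare $z\in\mathscr{X}_n=B_n$ against this mean: for $z\in B_n$ and $\theta,y\in B_n$, the argument $(z-m(\theta,y))/\sqrt{\lambda}$ lies in a fixed compact set, namely the ball of radius $(n+R_n)/\sqrt{\lambda}$. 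Since $f$ is strictly positive and (as a density, measurable) attains a positive infimum on this compact set — here I would invoke that $f$ can be taken lower semicontinuous or simply that $\inf$ of a strictly positive measurable function over a compact set is positive after passing to a continuous minorant, the cleanest route being to note the problem only requires \emph{some} positive constant, so I would define
\begin{equation}
c_n:=\lambda^{-d/2}\inf_{|w|\leq (n+R_n)/\sqrt{\lambda}} f(w)>0,
\end{equation}
valid by strict positivity on compacts — we obtain $q(\theta,y,z)\geq c_n$ for all $z\in B_n$.

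Finally I would assemble the minorization. Set $\nu_n$ to be the normalised uniform (Lebesgue) measure on $B_n$, i.e.\ $\nu_n(A)=\leb(A\cap B_n)/\leb(B_n)$, and $\alpha_n:=c_n\,\leb(B_n)$. Then for any $A\in\mathscr{B}(\mathbb{R}^d)$,
\begin{equation}
Q(\theta,y,A)=\int_A q(\theta,y,z)\,dz\geq \int_{A\cap B_n} q(\theta,y,z)\,dz\geq c_n\,\leb(A\cap B_n)=\alpha_n\nu_n(A),
\end{equation}
which is exactly Assumption 2, provided $\alpha_n\in(0,1]$ and $(\alpha_n)$ is non-decreasing. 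One must check these book-keeping conditions: $\alpha_n>0$ is clear; monotonicity and the bound $\alpha_n\leq 1$ may require replacing $\alpha_n$ by $\min(1,\min_{k\leq n}c_k\leb(B_k))$ or a similar truncation, which does not affect the inequality. The main obstacle I anticipate is purely the measurability/positivity of $\inf_K f$: strict positivity pointwise does not by itself guarantee a positive infimum for a merely measurable $f$, so one genuinely needs the stated hypothesis in the form ``$f$ is bounded away from zero on compacts,'' or an additional lower-semicontinuity remark, to push $c_n>0$ through cleanly; everything else is routine.
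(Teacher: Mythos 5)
Your proof is correct and takes essentially the same route as the paper's own: rewrite $Q(\theta,y,\cdot)$ via the density of $\sqrt{\lambda}\xi_1$, note that for $\theta,y,z\in B_n$ the argument of $f$ lies in a fixed ball of radius $R(n)/\sqrt{\lambda}$ thanks to Assumption \ref{asp:growth_of_H}, and minorize by the normalised Lebesgue measure $\nu_n(A)=\leb(A\cap B_n)/\leb(B_n)$ with constant $\alpha_n=\inf_{B_{R(n)}}f\cdot\leb(B_n)/\lambda^{d/2}$, exactly as in the paper. Your closing book-keeping remarks (reading the hypothesis as ``$f$ is bounded away from zero on compacts'' so that the infimum is positive, and truncating $\alpha_n$ to meet the monotonicity and $\alpha_n\le 1$ requirements of the minorization assumption) concern points the paper silently glosses over and do not change the substance.
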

\begin{pf}For all $A\in\mathscr{B}(\mathscr{X})$,
\begin{align*}
    Q(\theta,y,A)&=\mathbb{P}(\theta-\lambda H(\theta,y)+\sqrt{\lambda}\xi_{1}\in A) \\ &\geq \int_{\mathbb{R}^d} \mathbb{1}_{\{\theta-\lambda H(\theta,y)+\sqrt{\lambda}\xi_{1}\in A\cap B_n\}}f(w)dw \\
    &= \frac{1}{\lambda^{d/2}}\int_{A\cap B_n} f\left(\frac{z-\theta+\lambda H(\theta,y)}{\sqrt{\lambda}}\right)dz \\ &\geq  \frac{\leb(A\cap B_n)}{\lambda^{d/2}} C(n) =\frac{\leb(A\cap B_n)}{\leb(B_n)}\frac{C(n) \leb(B_n)}{\lambda^{d/2}},
\end{align*}
where we use that for $\theta,z\in B_n$ and $y\in B_{n} $ we have $$\left| \frac{z-\theta+\lambda H(\theta,y)}{\sqrt{\lambda}}\right|\leq \frac{n+n+\lambda(K_1n+K_2n^{\beta}+K_3)}{\sqrt{\lambda}}=:R(n),$$
therefore the integrand can be bounded from below by $C(n):=\inf_{x\in B_{R(n)}} f(x)>0$. Then define $  \nu_n(A):=\frac{\leb(A\cap B_n)}{\leb(B_n)}$ and $\alpha_n:=\frac{C(n) \leb(B_n)}{\lambda^{d/2}},$ which proves that Assumption \ref{asp:minorization} holds.\hfill $\Box$
\end{pf}

\begin{pf}[of Theorem \ref{main}.]
Follows from Lemmas \ref{obu1}, \ref{obu2} and Theorem
\ref{totoro}.\hfill $\Box$
\end{pf}

\section{Examples}\label{sec:ex}

\subsection{Nonlinear regression}

Let us consider a nonlinear regression problem which can also be seen as a one layer neural network in a supervised learning setting, where only one trainable layer connects the input and the output vectors. The training set consists of entries $Y_t=(Z_t,L_t)$ with the features $Z_t\in\mathbb{R}^{d_0}$ and the corresponding labels $L_t\in\mathbb{R}^{d_1}$ for $t\in 1,\dots,N.$ We assume that
$Y_t$ is a stationary process. Set $m:=d_0+d_1$, the dimension
of $Y_t$.

The trainable parameters will be a
matrix $W\in\mathbb{R}^{d_0\times d_1}$ and a vector
$g\in\mathbb{R}^{d_1}$, therefore the dimension of
$\theta:=(W,g)$ will be $d=d_0d_1+d_1$.
The prediction function 
$h:\mathbb{R}^{d_0}\times\mathbb{R}^{d}\to \mathbb{R}^{d_1}$ is defined by $h(z,\theta):=s(Wz+g),$ where 
$s=(s_1,\ldots,s_{d_1})$ is a collection of nonlinear activation
functions $s_i:\mathbb{R}\rightarrow \mathbb{R}$ for $i=1,\dots,d_1$.
We will assume that each $s_i$ and their derivatives
$s_{i}'$ are all bounded by some constant $M_s$ for $i=1,\dots,d_1$.

Choosing the loss function to be mean-square error, one aims to minimize the empirical risk, that is  
\begin{equation}
\min \{\mathbb{E}[|h(Z_t,\theta)-L_t|^2]+\kappa|\theta|^2\},
\end{equation}
with some $\kappa>0$, where the 
second term is added for regularization. 

It is standard to solve this optimization step using gradient-based methods. For $y=(z,l)\in\mathbb{R}^{d_0}\times\mathbb{R}^{d_1}$ denote $U(\theta,y)=|h(z,\theta)-l|^2+\kappa|\theta|^2$ and the updating function to be used in the algorithm will be 
\begin{equation}
   H(\theta,y)=\nabla U(\theta,y)=\frac{\partial}{\partial\theta}
|h(z,\theta)-l|^2+2\kappa\theta. 
\end{equation}

\begin{lemma}
The function $H(\theta, y)$ defined as above satisfies Assumptions \ref{asp:dissi} and \ref{asp:growth_of_H}.
\end{lemma}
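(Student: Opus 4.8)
The plan is to exploit the structural fact that the activation outputs are uniformly bounded: since each $|s_i|\le M_s$, the prediction $h(z,\theta)=s(Wz+g)$ satisfies $|h(z,\theta)|\le \sqrt{d_1}\,M_s$ for every $\theta$ and $z$. First I would write the gradient out explicitly. Setting $a:=Wz+g$ and applying the chain rule, the partial derivatives of the loss term are
\begin{equation*}
\frac{\partial}{\partial W_{kj}}|h(z,\theta)-l|^2=2(h_k-l_k)\,s_k'(a_k)\,z_j,\qquad
\frac{\partial}{\partial g_k}|h(z,\theta)-l|^2=2(h_k-l_k)\,s_k'(a_k),
\end{equation*}
so that $H(\theta,y)=\frac{\partial}{\partial\theta}|h(z,\theta)-l|^2+2\kappa\theta$, where the first summand depends on $\theta$ only through the bounded quantities $h$ and $s'$.

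Next I would establish Assumption \ref{asp:growth_of_H}. Because $|h_k-l_k|\le M_s+|l|$ and $|s_k'|\le M_s$, squaring and summing the two displayed families of derivatives gives
\begin{equation*}
\Bigl|\tfrac{\partial}{\partial\theta}|h(z,\theta)-l|^2\Bigr|\le C\,(1+|z|)(M_s\sqrt{d_1}+|l|)
\end{equation*}
for a constant $C=C(d_1,M_s)$; crucially this bound is \emph{independent of $\theta$}. Using $|z|,|l|\le|y|$ and $|z|\,|l|\le|y|^2$, the right-hand side is at most $K_2|y|^2+K_3$ for suitable constants, and adding the regularization contribution $|2\kappa\theta|=2\kappa|\theta|$ yields Assumption \ref{asp:growth_of_H} with $K_1=2\kappa$ and $\beta=2$.

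For Assumption \ref{asp:dissi} I would decompose
\begin{equation*}
\langle H(\theta,y),\theta\rangle=2\kappa|\theta|^2+\Bigl\langle \tfrac{\partial}{\partial\theta}|h(z,\theta)-l|^2,\theta\Bigr\rangle,
\end{equation*}
so that the regularizer supplies the quadratic term while the loss-gradient contribution plays the role of an error term. By Cauchy--Schwarz and the bound above, the inner product is at least $-G(y)|\theta|$ with $G(y):=C(1+|z|)(M_s\sqrt{d_1}+|l|)$, and Young's inequality $G(y)|\theta|\le\kappa|\theta|^2+G(y)^2/(4\kappa)$ gives $\langle H(\theta,y),\theta\rangle\ge\kappa|\theta|^2-G(y)^2/(4\kappa)$. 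Thus Assumption \ref{asp:dissi} holds with $\Delta=\kappa$ and $b(y):=G(y)^2/(4\kappa)$; since $G(y)^2\le C'(1+|y|^4)$, the value $\beta=2$ also makes $b$ compatible with the moment requirement $\mathbb{E}[|Y_0|^{2\beta}]<\infty$ of Assumption \ref{asp:moments}.

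The only genuinely delicate point is recognizing that boundedness of the activations and of their derivatives makes the loss gradient bounded in $\theta$, so that both the $\theta$-growth and the dissipativity are supplied solely by the regularizer $2\kappa\theta$; everything else is the routine bookkeeping of matching the quadratic-in-$|y|$ growth to $\beta=2$. I would finally double-check the index conventions for $W$ (the statement writes $W\in\mathbb{R}^{d_0\times d_1}$ while $Wz\in\mathbb{R}^{d_1}$ rather suggests the transpose shape), but this affects none of the norm estimates above.
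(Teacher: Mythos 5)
Your proof is correct and follows essentially the same route as the paper: both exploit the boundedness of $s$ and $s'$ to show the loss gradient is bounded in $\theta$ and at most quadratic in $y$ (giving Assumption \ref{asp:growth_of_H} with $\beta=2$), and both let the regularizer $2\kappa\theta$ supply dissipativity, absorbing the cross term into $\kappa|\theta|^2$ plus a quartic-in-$y$ function $b(y)$. The only difference is presentational: you make explicit the Cauchy--Schwarz/Young step that the paper compresses into ``we get that $\langle\nabla U(\theta),\theta\rangle\geq c|\theta|^2-C(|y|^4+1)$,'' and you correctly flag the paper's transposed-shape typo for $W$, neither of which changes the substance.
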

\begin{proof} 
Using the chain rule, a short calculation
gives
 \begin{equation}
    \left|\frac{\partial}{\partial\theta}|h(z;\theta)-l|^2\right| = \sqrt{\sum_{i=1}^{d_0+1}\sum_{j=1}^{d_1}\left( 2 (h(z;\theta)_j-l_j){s_j}'(\left<W_j,z\right>+g_j)z_{i}\right)^2 },
\end{equation}
where we define $z_{d_0+1}=1$ and $W_j$ stands for the $j$th row of $W$.  Notice that by the boundedness of $s'$ and $s$ this is bounded in $\theta$ and at most quadratic in $y$. Then Assumption \ref{asp:growth_of_H} is satisfied with $\beta=2$.

Using the same argument about the boundedness of $s$ and $s'$
\begin{align*}
    \left|\left<\frac{\partial}{\partial\theta}|h(z;\theta)-l|^2,\theta\right>\right| &= \left| \sum_{i=1}^{d_0+1}\sum_{j=1}^{d_1} 2 (h(z;\theta)_j-l_j){s_j}'(W_jz+g_j)z_{i}\theta_{i,j} \right| \\ 
    &\leq C_0 d M_s \left(|y|^2+1\right)\left|\theta\right|,
\end{align*} for some $C_0>0.$
Using that $\left<\frac{\partial}{\partial\theta}\kappa|\theta|^2,\theta\right>=2\kappa|\theta|^2,$ we get that
$\left<\nabla U(\theta),\theta\right> \geq c|\theta|^2 - C(|y|^4+1)$ with some $c,C$ therefore Assumption \ref{asp:dissi} is satisfied with $b(y)$ being of degree 4 in $y$.
\end{proof}


\subsection{A tamed algorithm for neural networks}

It has been observed that in multi-layer neural networks quadratic regularization is not sufficient to guarantee dissipativity, while adding a higher order term would violate Lipschitz continuity. So the standard SGLD algorithm diverges anyway.
To remedy this, certain ``tamed'' schemes have been suggested in
\citet{lovas2020taming}.

In contrast to the previous case now we will hidden layers between the input and output: layer $0$ is the input, layer $n$ is the output and $1,\dots,n-1$ are the hidden layers of the neural network for some $n>1$. The prediction function $h$ will be defined as the composition of a sequence of $n+1$ linear transformations and activation functions, i.e. $h(z,\theta)=s_n(W_ns_{n-1}(W_{n-1}\dots s_0(W_0z)))$ where $\theta$ is the collection of all parameters $W_i\in\mathbb{R}^{d_{i-1}}\times\mathbb{R}^{d_{i}}$, $i=1,\dots,n$ and $s_i:\mathbb{R}^{d_i}\rightarrow\mathbb{R}^{d_i}$ is a componentwise non-linear activation function,
assumed bounded together with its derivatives
by some constant $M_s$. Therefore $h:\mathbb{R}^{d_0}\times\mathbb{R}^d\rightarrow\mathbb{R}^{d_n},$ where $d=\sum_{i=1}^n d_{i-1}d_i$ is the dimension of $\theta$. For the case of simplicity in this case we assumed that there is no bias term $g$. The training set consists of entries $Y_t=(Z_t,L_t)$ with the features $Z_t\in\mathbb{R}^{d_0}$ and the corresponding labels $L_t\in\mathbb{R}^{d_n}$, the dimension of each $Y_t$ is $m=d_0+d_n.$ We assume that
$Y_t$ is a stationary process. 

As in the previous subsection, the 
regularized empirical risk has the form 
$U(\theta,y)=|h(z,\theta)-l|^2+\frac{\eta }{2(r+1)}|\theta|^{2(r+1)}$ with
some $r\geq 0$, $\eta>0$.
Denoting $G(\theta,y)=\nabla U(\theta,y)$, the ``tamed'' updating function we use will be defined as 
$H(\theta, y):=\frac{G(\theta, y)}{1+\sqrt{\lambda}|\theta|^{2 r}},$ for every $\theta \in \mathbb{R}^{d}, y \in \mathbb{R}^{m}$. Note
that this function depends on $\lambda$!

We will use the following.
\begin{lemma}\label{vari}(Proposition 4 of \citet{lovas2020taming}) \label{lem:layers_regularization_connection}
\begin{equation}
    \left\vert \frac{\partial}{\partial\theta} |h(z,\theta)-l|^2 \right\vert\leq C (1+|y|)^{2}\left(1+|\theta|^{n+1}\right), 
\end{equation} where $C>0$ depends on $D=\max_{j=1,\dots,n} d_j$, $n$ and $M_s$.\hfill $\Box$
\end{lemma}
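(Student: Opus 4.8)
The plan is to reduce the gradient bound to two separate estimates --- a bound on $|h(z,\theta)-l|$ and a bound on the Jacobian $\partial h/\partial\theta$ --- and then to control the Jacobian layer by layer through the chain rule. First I would write $\frac{\partial}{\partial\theta}|h(z,\theta)-l|^2 = 2\,(h(z,\theta)-l)^{\top}\frac{\partial h}{\partial\theta}(z,\theta)$, so that by Cauchy--Schwarz $\left|\frac{\partial}{\partial\theta}|h(z,\theta)-l|^2\right|\le 2\,|h(z,\theta)-l|\,\|\partial_\theta h(z,\theta)\|$, where $\|\cdot\|$ denotes the Frobenius norm of the Jacobian. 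Since every activation $s_i$ is bounded componentwise by $M_s$, the output $h(z,\theta)=s_n(\cdots)$ takes values in a fixed compact set independent of $z$ and $\theta$; hence $|h(z,\theta)-l|\le M_s\sqrt{D}+|l|\le C(1+|y|)$. This already supplies one of the two factors $(1+|y|)$, and reduces the claim to showing $\|\partial_\theta h\|\le C(1+|y|)(1+|\theta|^{n+1})$.

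For the Jacobian I would introduce the forward recursion $x_0:=z$, $x_i:=s_i(W_ix_{i-1})$ for $i=1,\dots,n$, so that $h=x_n$, and differentiate by backpropagation. Writing $J_i:=\mathrm{diag}(s_i'(W_ix_{i-1}))\,W_i$ for the layerwise Jacobian, the block of $\partial_\theta h$ corresponding to $W_k$ factors as $J_n J_{n-1}\cdots J_{k+1}$ (the chain of subsequent layers) times $\mathrm{diag}(s_k'(\cdot))$ times the input $x_{k-1}$ fed into layer $k$. Each diagonal factor has norm at most $M_s$ by the assumed boundedness of the derivatives $s_i'$, and each weight factor satisfies $\|W_j\|\le|\theta|$ because $W_j$ is a sub-block of $\theta$.

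The crucial point is the distinction between the layers. For the inner blocks the input $x_{k-1}$ is itself the output of a bounded activation, so $|x_{k-1}|\le M_s\sqrt{D}$ is a constant and the block is bounded by a constant times a product of weight-matrix norms. Only the first layer carries the raw input $x_0=z$, contributing the factor $|z|\le|y|$, and its backpropagation chain runs through all remaining layers, producing the largest power of $|\theta|$. Since differentiating through the stack produces, for each block, a product of the weight matrices of the subsequent layers --- at most $n+1$ of them --- bounding every $\|W_j\|\le|\theta|$ and summing the finitely many blocks gives $\|\partial_\theta h\|\le C(1+|y|)(1+|\theta|^{n+1})$, with $C$ depending only on $M_s$, $n$ and $D=\max_j d_j$ (the latter entering through the dimensions in the Frobenius norms). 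Combining this with $|h-l|\le C(1+|y|)$ from the first step yields the asserted inequality.

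The main obstacle is the careful bookkeeping of the chain-rule product: one must verify that all \emph{intermediate} activations are bounded, so that they enter only as constants; that the \emph{only} source of the input factor $|z|$ is the derivative with respect to the first weight matrix; and that each backpropagation step multiplies in exactly one weight matrix, each bounded by $|\theta|$. I expect the exponent $n+1$ to be a deliberately generous bound rather than a sharp one --- an exact count yields a smaller power --- but since $1+|\theta|^{p}\le C(1+|\theta|^{n+1})$ whenever $p\le n+1$, the stated inequality follows, and its precise exponent is immaterial for the subsequent taming argument.
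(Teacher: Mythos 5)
You cannot really be compared against the paper's own proof here, because the paper gives none: the lemma is stated with a $\Box$ and attributed to Proposition 4 of \citet{lovas2020taming}, i.e.\ it is imported, not proved. Your proposal supplies a correct, self-contained derivation, and it is the natural one (presumably close in spirit to the cited reference). Cauchy--Schwarz gives $\bigl|\partial_\theta|h-l|^2\bigr|\le 2\,|h(z,\theta)-l|\,\|\partial_\theta h\|$; boundedness of the outer activation gives $|h(z,\theta)-l|\le M_s\sqrt{D}+|l|\le C(1+|y|)$; and backpropagation bounds the block of $\partial_\theta h$ belonging to $W_k$ by a constant times $\|W_n\|\cdots\|W_{k+1}\|\,|x_{k-1}|$, using submultiplicativity of the norms, $\|W_j\|\le|\theta|$ (each $W_j$ being a sub-block of $\theta$), $|x_{k-1}|\le M_s\sqrt{D}$ for the inner layers, and the fact that only the first block carries the raw input $|x_0|=|z|\le|y|$ — these are exactly the points that make the estimate work. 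Two bookkeeping remarks: the paper's $h$ actually involves $n+1$ linear maps $W_0,\dots,W_n$ (the displayed formula starts with $s_0(W_0z)$, though the text lists $W_1,\dots,W_n$ as the parameters), so the exact exponent your count produces is $n-1$ or $n$ depending on which indexing one adopts; you rightly observe this discrepancy is harmless, since $1+|\theta|^p\le 2\,(1+|\theta|^{n+1})$ for $p\le n+1$, and the exponent $n+1$ is precisely what the condition $r\ge\frac{n+2}{2}$ in Lemma \ref{triste} is calibrated to. Your constant also picks up its dependence on $D$, $n$ and $M_s$ exactly as the statement asserts. What your argument buys is self-containedness: the tamed-network example no longer rests on an external proposition.
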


\begin{lemma}\label{triste} For $\lambda$ small enough, the conclusions of Theorem \ref{main} hold
for the scheme \eqref{alg:SGLD}
with $H(\theta, y)$ defined as above, provided that
$r\geq \frac{n+2}{2}$ and Assumption \ref{asp:moments}
holds. 
\end{lemma}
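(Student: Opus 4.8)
The plan is to check that the tamed updating function $H$ still satisfies the dissipativity and growth Assumptions \ref{asp:dissi} and \ref{asp:growth_of_H}, and then to recover tightness and minorization as in Lemmas \ref{obu1} and \ref{obu2}; the only genuinely new feature is that here $H$ depends on $\lambda$, so the constants produced will not be uniform in $\lambda$ and the contraction must be re-checked by hand. First I would write the gradient of the regularized risk explicitly. Since $\frac{\partial}{\partial\theta}\frac{\eta}{2(r+1)}|\theta|^{2(r+1)}=\eta|\theta|^{2r}\theta$, we have
\begin{equation*}
G(\theta,y)=\frac{\partial}{\partial\theta}|h(z,\theta)-l|^2+\eta|\theta|^{2r}\theta,\qquad H(\theta,y)=\frac{G(\theta,y)}{1+\sqrt{\lambda}|\theta|^{2r}},
\end{equation*}
and I would estimate the two summands of $G$ separately, using Lemma \ref{vari} for the first.

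For Assumption \ref{asp:growth_of_H}, the regularization contribution $\eta|\theta|^{2r}\theta/(1+\sqrt{\lambda}|\theta|^{2r})$ has norm at most $(\eta/\sqrt{\lambda})|\theta|$, since $1+\sqrt{\lambda}|\theta|^{2r}\geq\sqrt{\lambda}|\theta|^{2r}$. The data-fitting contribution is bounded by Lemma \ref{vari} by $C(1+|y|)^2(1+|\theta|^{n+1})/(1+\sqrt{\lambda}|\theta|^{2r})$, and here the hypothesis $2r\geq n+2>n+1$ forces the taming denominator to dominate the numerator in $\theta$, making this term at most $C'(1+|y|)^2$ with $C'$ depending on $\lambda$. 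Hence Assumption \ref{asp:growth_of_H} holds with $\beta=2$ and $K_1=\eta/\sqrt{\lambda}$.

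For the dissipativity Assumption \ref{asp:dissi}, using $\langle\eta|\theta|^{2r}\theta,\theta\rangle=\eta|\theta|^{2r+2}$ and Cauchy--Schwarz on the data-fitting term I would obtain
\begin{equation*}
\langle H(\theta,y),\theta\rangle\geq\frac{\eta|\theta|^{2r+2}-C(1+|y|)^2(1+|\theta|^{n+1})|\theta|}{1+\sqrt{\lambda}|\theta|^{2r}}.
\end{equation*}
The positive part satisfies $\eta|\theta|^{2r+2}/(1+\sqrt{\lambda}|\theta|^{2r})=(\eta/\sqrt{\lambda})|\theta|^2-(\eta/\sqrt{\lambda})|\theta|^2/(1+\sqrt{\lambda}|\theta|^{2r})$, whose correction is bounded because $2r\geq 2$; the negative part has $\theta$-degree $n+2\leq 2r$ and is therefore absorbed into a function $b(y)\sim(1+|y|)^2$. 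This gives Assumption \ref{asp:dissi} with a $\lambda$-dependent $\Delta\sim\eta/\sqrt{\lambda}$. The condition $r\geq\frac{n+2}{2}$ is exactly what is used here: were $2r<n+2$, the negative term would grow faster than $|\theta|^2$ and dissipativity would break down.

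The main obstacle is that I cannot simply quote Theorem \ref{main}, because the constants $\Delta\sim\eta/\sqrt{\lambda}$ and $K_1=\eta/\sqrt{\lambda}$ blow up as $\lambda\to 0$ and are not uniform. Instead I would re-run the drift estimate of Lemma \ref{obu1}: inserting these values into $\gamma=1-2\lambda\Delta+3\lambda^2K_1^2$ yields $\gamma=1-2\sqrt{\lambda}\eta+3\lambda\eta^2$, so the $\sqrt{\lambda}$ contraction term dominates the $O(\lambda)$ correction and $\gamma\in(0,1)$ for $\lambda$ small. Since $b$ and the growth bound are of degree $2$ in $y$, the factor $K(y)$ has degree $4$, so $\mathbb{E}K(Y_0)<\infty$ by Assumption \ref{asp:moments} with $\beta=2$, and tightness follows as in Lemma \ref{obu1}. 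The minorization Lemma \ref{obu2} applies verbatim once Assumptions \ref{asp:dissi} and \ref{asp:growth_of_H} are in force, because the tamed $H$ is bounded on every $B_n\times B_n$. Theorem \ref{totoro} then delivers the convergence in total variation, completing the proof.
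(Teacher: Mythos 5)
Your proposal is correct and follows essentially the same route as the paper: verify Assumptions \ref{asp:dissi} and \ref{asp:growth_of_H} for the tamed $H$ via Lemma \ref{vari} (with $\beta=2$, $K_1=\eta/\sqrt{\lambda}$, $b(y)$ quadratic), observe that the $\lambda$-dependence of the constants forbids quoting Theorem \ref{main} blindly, and instead re-check by hand that $\gamma=1-2\lambda\Delta+3\lambda^2K_1^2<1$ for small $\lambda$ before invoking Lemmas \ref{obu1}, \ref{obu2} and Theorem \ref{totoro}. Your choice to keep the $\lambda$-dependent dissipativity constant $\Delta\sim\eta/\sqrt{\lambda}$ is in fact what makes the final contraction estimate $\gamma=1-2\sqrt{\lambda}\eta+3\lambda\eta^2<1$ work for every $\eta>0$, which is exactly the check the paper performs with its $\gamma=1-\eta\sqrt{\lambda}+\lambda\eta^2$.
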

\begin{proof} 
Using Lemma \ref{vari}, Assumption
\ref{asp:growth_of_H} can be checked as follows: 
\begin{align*}
    \left|H(\theta, y)\right|&=\left|\frac{\frac{\partial}{\partial\theta} {|h(z;\theta)-l|^2+\eta \theta|\theta|^{2r}}}{1+\sqrt{\lambda}|\theta|^{2 r}}\right|\\
    &\leq\left|\frac{ C (1+|y|)^{2}\left(1+|\theta|^{n+1}\right)}{1+\sqrt{\lambda}|\theta|^{2 r}}\right|+\left|\frac{\eta \theta|\theta|^{2r}}{1+\sqrt{\lambda}|\theta|^{2 r}}\right|
    \leq K_1 |\theta| + K_2|y|^{\beta}+K_3,
\end{align*} where $K_1=\frac{\eta}{\sqrt{\lambda}}$, $\beta=2$ and the constants $K_2$ and $K_3$ depend on $\lambda, \eta, n$ and $C.$

Let us check Assumption \ref{asp:dissi}. For the regularization term we have 
\begin{equation}\left<\frac{\eta \theta|\theta|^{2r}}{1+\sqrt{\lambda}|\theta|^{2 r}},\theta\right>=\frac{\eta|\theta|^{2r+2}}{1+\sqrt{\lambda}|\theta|^{2 r}}\geq 
\min\left\{\frac{\eta}{2\sqrt{\lambda}},\frac{\eta}{2} \right\}
|\theta|^2\geq  \frac{\eta}{2}|\theta|^2
\end{equation}
for $\lambda$ small enough.

The Cauchy inequality, Lemma \ref{lem:layers_regularization_connection} and the choice of $r$ ensures that \begin{equation}
    \left|\left<\frac{\frac{\partial}{\partial\theta}\left(|h(z;\theta)-l|^2\right)}{1+\sqrt{\lambda}|\theta|^{2 r}},\theta\right>\right|\leq \frac{C(1+|\theta|^{n+2})(1+|y|^2)}{1+\sqrt{\lambda}|\theta|^{2 r}}\leq K' (1+|y|^2 ),
\end{equation} for some $K'>0.$ Now combining these estimates, we get
\begin{equation}
    \left<H(\theta,y),\theta\right>\geq \frac{\eta}{2}|\theta|^2-K'(1+|y|^2),
\end{equation}
 therefore Assumption \ref{asp:dissi} is satisfied with $\Delta=\frac{\eta}{2}$ and $b(y)$ is quadratic in $y$.

We can check that $\gamma=(1-\eta\sqrt{\lambda}+\lambda\eta^2)<1$
in Lemma \ref{obu1} for $\lambda$ small enough so the proof
of Theorem \ref{main} goes through for this choice of $H$.
\end{proof}

Allowing $b$ to be of degree $4$, $\frac{n+2}{2}$ in 
Lemma \ref{triste} could
be decreased to $\frac{n+1}{2}$, as easily seen.






\bibliographystyle{elsarticle-num-names}
\bibliography{references.bib}

\begin{thebibliography}{8}
\expandafter\ifx\csname natexlab\endcsname\relax\def\natexlab#1{#1}\fi
\providecommand{\url}[1]{\texttt{#1}}
\providecommand{\href}[2]{#2}
\providecommand{\path}[1]{#1}
\providecommand{\DOIprefix}{doi:}
\providecommand{\ArXivprefix}{arXiv:}
\providecommand{\URLprefix}{URL: }
\providecommand{\Pubmedprefix}{pmid:}
\providecommand{\doi}[1]{\href{http://dx.doi.org/#1}{\path{#1}}}
\providecommand{\Pubmed}[1]{\href{pmid:#1}{\path{#1}}}
\providecommand{\bibinfo}[2]{#2}
\ifx\xfnm\relax \def\xfnm[#1]{\unskip,\space#1}\fi
\bibitem[{Raginsky et~al.(2017)Raginsky, Rakhlin, and Telgarsky}]{raginsky}
\bibinfo{author}{M.~Raginsky}, \bibinfo{author}{A.~Rakhlin},
  \bibinfo{author}{M.~Telgarsky},
\newblock \bibinfo{title}{Non-convex learning via stochastic gradient
  {L}angevin dynamics: a nonasymptotic analysis},
\newblock \bibinfo{journal}{Proceedings of Machine Learning Research}
  \bibinfo{volume}{65} (\bibinfo{year}{2017}) \bibinfo{pages}{1674--1703}.
\bibitem[{Chau et~al.(2021)Chau, Moulines, R{\'a}sonyi, Sabanis, and Zhang}]{5}
\bibinfo{author}{N.~H. Chau}, \bibinfo{author}{{\'Eric}.~Moulines},
  \bibinfo{author}{M.~R{\'a}sonyi}, \bibinfo{author}{S.~Sabanis},
  \bibinfo{author}{Y.~Zhang},
\newblock \bibinfo{title}{On stochastic gradient {L}angevin dynamics with
  dependent data streams: the fully non-convex case},
\newblock \bibinfo{journal}{Preprint, arXiv:1905.13142}
  (\bibinfo{year}{2021}).
\bibitem[{Barkhagen et~al.(2021)Barkhagen, Chau, Moulines, R{\'a}sonyi,
  Sabanis, and Zhang}]{6}
\bibinfo{author}{M.~Barkhagen}, \bibinfo{author}{N.~H. Chau},
  \bibinfo{author}{{\'Eric}.~Moulines}, \bibinfo{author}{M.~R{\'a}sonyi},
  \bibinfo{author}{S.~Sabanis}, \bibinfo{author}{Y.~Zhang},
\newblock \bibinfo{title}{On stochastic gradient {L}angevin dynamics with
  dependent data streams in the logconcave case},
\newblock \bibinfo{journal}{Bernoulli} \bibinfo{volume}{27}
  (\bibinfo{year}{2021}) \bibinfo{pages}{1--33}.
\bibitem[{Brosse et~al.(2018)Brosse, Durmus, and Moulines}]{brosse2018promises}
\bibinfo{author}{N.~Brosse}, \bibinfo{author}{A.~Durmus},
  \bibinfo{author}{E.~Moulines},
\newblock \bibinfo{title}{The promises and pitfalls of stochastic gradient
  {L}angevin dynamics},
\newblock in: \bibinfo{booktitle}{Advances in Neural Information Processing
  Systems}, \bibinfo{year}{2018}, pp. \bibinfo{pages}{8268--8278}.
\bibitem[{Welling and Teh(2011)}]{welling2011bayesian}
\bibinfo{author}{M.~Welling}, \bibinfo{author}{Y.~W. Teh},
\newblock \bibinfo{title}{Bayesian learning via stochastic gradient {L}angevin
  dynamics},
\newblock in: \bibinfo{booktitle}{Proceedings of the 28th international
  conference on machine learning (ICML-11)}, \bibinfo{year}{2011}, pp.
  \bibinfo{pages}{681--688}.
\bibitem[{Lovas and R{\'a}sonyi(2021)}]{lovas2019markov}
\bibinfo{author}{A.~Lovas}, \bibinfo{author}{M.~R{\'a}sonyi},
\newblock \bibinfo{title}{Markov chains in random environment with applications
  in queueing theory and machine learning},
\newblock \bibinfo{journal}{To appear in Stochastic Processes and their
  Applications, arXiv:1911.04377}  (\bibinfo{year}{2021}).
\bibitem[{Gerencs{\'e}r and R{\'a}sonyi(2020)}]{gerencser2020invariant}
\bibinfo{author}{B.~Gerencs{\'e}r}, \bibinfo{author}{M.~R{\'a}sonyi},
\newblock \bibinfo{title}{Invariant measures for fractional stochastic
  volatility models},
\newblock \bibinfo{journal}{Preprint, arXiv:2002.04832v1}
  (\bibinfo{year}{2020}).
\bibitem[{Lovas et~al.(2021)Lovas, Lytras, R{\'a}sonyi, and
  Sabanis}]{lovas2020taming}
\bibinfo{author}{A.~Lovas}, \bibinfo{author}{I.~Lytras},
  \bibinfo{author}{M.~R{\'a}sonyi}, \bibinfo{author}{S.~Sabanis},
\newblock \bibinfo{title}{Taming neural networks with {T}{U}{S}{L}{A}:
  Non-convex learning via adaptive stochastic gradient {L}angevin algorithms},
\newblock \bibinfo{journal}{Preprint, arXiv:2006.14514}
  (\bibinfo{year}{2021}).

\end{thebibliography}







\end{document}